\theoremstyle{plain}
\newtheorem{prop}{Proposition}[section]
\newtheorem{thm}[prop]{Theorem}
\newtheorem{cor}[prop]{Corollary}
\newtheorem{lem}[prop]{Lemma}
\theoremstyle{definition}
\newtheorem{dfn}[prop]{Definition}
\newtheorem{rem}[prop]{Remark}
\newtheorem{rems}[prop]{Remarks}
\newtheorem{lab}[prop]{}
\numberwithin{equation}{section}
\newcommand{\isoto}{\overset{\sim}{\to}}
\renewcommand{\subset}{\subseteq}
\newcommand{\C}{{\mathbb{C}}}
\newcommand{\N}{{\mathbb{N}}}
\renewcommand{\P}{{\mathbb{P}}}
\newcommand{\Q}{{\mathbb{Q}}}
\newcommand{\R}{{\mathbb{R}}}
\newcommand{\bbS}{{\mathbb{S}}}
\newcommand{\T}{{\mathrm{T}}}
\newcommand{\sfS}{{\mathsf{S}}}
\DeclareMathOperator{\Gal}{Gal}
\DeclareMathOperator{\Gram}{Gram}
\DeclareMathOperator{\Hom}{Hom}
\DeclareMathOperator{\spn}{span}
\DeclareTextFontCommand{\textnf}{\normalfont}
\newcommand{\gal}{{\textrm{gal}}}
\newcommand{\du}{{\scriptscriptstyle\vee}}
\renewcommand{\emptyset}{\varnothing}
\renewcommand{\setminus}{\smallsetminus}
\renewcommand{\epsilon}{\varepsilon}
\newcommand{\ol}{\overline}
\newcommand{\plus}{{\scriptscriptstyle+}}
\newcommand{\ex}{\exists\,}
\renewcommand{\choose}[2]{\genfrac(){0pt}{}{#1}{#2}}
\begin{document}

\title
[Two remarks on sums of squares with rational coefficients]
{Two remarks on sums of squares\\with rational coefficients}

\author
 {Jose Capco}
\address
 {Research Institute for Symbolic Computation, Johannes Kepler University
 \\ Altenberger Stra\ss e 69, 4040 Linz, Austria}
\email
 {jcapco@risc.jku.at}

\author
 {Claus Scheiderer}
\address
 {Fachbereich Mathematik und Statistik, Universit\"at Konstanz,
 78457 Konstanz, Germany}
\email
 {claus.scheiderer@uni-konstanz.de}

\begin{abstract}
There exist homogeneous polynomials $f$ with $\Q$-coefficients that
are sums of squares over $\R$ but not over $\Q$. The only systematic
construction of such polynomials that is known so far uses as its key
ingredient totally imaginary number fields $K/\Q$ with specific
Galois-theoretic properties. We first show that one may relax these
properties considerably without losing the conclusion, and that this
relaxation is sharp at least in a weak sense. In the second part we
discuss the open question whether any $f$ as above necessarily has a
(non-trivial) real zero. In the minimal open cases $(3,6)$ and
$(4,4)$, we prove that all examples without a real zero are contained
in a thin subset of the boundary of the sum of squares cone.
\end{abstract}

\maketitle


\section{Introduction}

Let $f\in\Q[x_1,\dots,x_n]$ be a polynomial with rational
coefficients. Given a field extension $E/\Q$ we say that $f$ is a sum
of squares over $E$, or briefly that $f$ is \emph{$E$-sos}, if there
is a polynomial identity $f=\sum_{i=1}^rp_i^2$ with $p_1,\dots,p_r\in
E[x_1,\dots,x_n]$. Suppose that $f$ is $\R$-sos. Then does it follow
that $f$ is $\Q$-sos?
This question is certainly of theoretical interest, but is also
relevant from a practical perspective. Indeed, it is one particular
instance of the general problem of finding exact (rather than
floating-point) positivity certificates.

In general, the answer is negative, as was shown in \cite{sch:rat}.
In fact, an explicit construction was presented there of
$\Q$-polynomials $f$ that are $\R$-sos but not $\Q$-sos. From this
negative answer, a series of natural follow-up questions arises, see
Section~5 in \cite{sch:rat}. In this article we discuss two of them.
Throughout we assume that our polynomials are forms, i.e.\ they are
homogeneous.

Recall that the basic construction in \cite{sch:rat} starts out with
a totally imaginary number field $K/\Q$ of even degree $2d$ and a
linear form $l\in K[x_1,\dots,x_n]$ with sufficiently general
coefficients. The norm form $f=N_{K/\Q}(l)$ is a degree~$2d$ form
over $\Q$ and is $\R$-sos. Let $K^\gal/\Q$ be the Galois hull of
$K/\Q$, and let $G=\Gal(K^\gal/\Q)$ act on the set $X=\Hom(K,\C)$ of
(complex) places of $K$ (note $|X|=2d$). According to \cite{sch:rat},
if $G$ is sufficiently ``big'' as a subgroup of the symmetric group
$S_{2d}$, then $f$ cannot be $\Q$-sos. For example, it is enough that
$G$ is doubly transitive on $X$. In Section~2 below we show that a
condition much weaker than $2$-transitivity suffices to make the
construction work (condition $(**)$), and that this relaxation is
sharp at least in a weak sense. Moreover we present empirical data
showing that up to degree $2d=16$, all transitive group actions
satisfying this condition do actually arise from a number field
$K/\Q$ as before.

All known examples of $\R$-sos forms $f\in\Q[x_1,\dots,x_n]$ that are
not $\Q$-sos have real zeros. In fact, the proofs for the
impossibility of writing $f$ as a sum of squares over $\Q$ make
crucial use of the existence of these real zeros. Therefore it is
natural to ask if there can be any examples of such forms without any
(non-trivial) real zero. See also Question 5.1 in \cite{sch:rat}. Let
$f\in\Q[x_1,\dots,x_n]$ be a form with $\deg(f)=2d$ that is $\R$-sos
and has strictly positive values. In Section~3 below we discuss the
first open case, namely $(n,2d)=(3,6)$. We conjecture that $f$ is
$\Q$-sos in this case, and we prove this conjecture for all $f$
outside of a Zariski-thin subset of the boundary of the sum of
squares cone $\Sigma_6$. An analogous result holds in the other
``Hilbert case'' $(n,2d)=(4,4)$.

In Section~3 we use some simple facts on Gram spectrahedra of forms.
For the reader's convenience we have included a brief introduction
to Gram spectrahedra in Section~4, together with proofs or references
for the facts that we use.

We remark that Question 5.3 from \cite{sch:rat} has recently been
given a negative answer by Laplagne \cite{la}. This question was
asking whether $f$ is $\Q$-sos if it becomes $K$-sos in an odd degree
extension $K/\Q$. Laplagne shows that the answer is negative. In fact,
he constructs examples of degree~$4$ polynomials
$f\in\Q[x_1,\dots,x_4]$ that are sums of squares over
$\Q(\root^3\of2)$ but not over~$\Q$.
\smallskip

\emph{Acknowledgement}:
We are indebted to J\"urgen Kl\"uners for sharing valuable
information on the Database for Number Fields \cite{dnf} with us.


\section{Conditions on the Galois group}

\begin{lab}\label{recall}
Let $K/\Q$ be a totally imaginary number field of degree $[K:\Q]=2d
\ge4$, let $K^\gal/\Q$ be its Galois hull and $G=\Gal(K^\gal/\Q)$ the
Galois group. The group $G$ acts transitively on the set
$X=\Hom(K,\C)$ of cardinality $|X|=2d$. (This action can be
identified with the $G$-action on the roots of the minimal polynomial
of a primitive element of $K/\Q$.) We fix once and for all an
embedding $K\subset\C$ and denote complex conjugation (restricted to
$K^\gal$) by $\tau\in G$. Since $K$ has no real place, $\tau$ acts on
$X$ without fixed point, i.e.\ as a product of $d$ pairwise disjoint
transpositions.

Let $l\in K[x_1,x_2,x_3]$ be a linear form, and let $f=N_{K/\Q}(l)$
be the $K/\Q$-norm of $l$. So $f\in\Q[x_1,x_2,x_3]$, and $f$ is the
product of the $2d$ Galois conjugates of~$l$. The form $f$ is a sum
of two squares of forms over the field $\R$ of real numbers. The
following was proved in \cite{sch:rat} (Sect.~2):
\end{lab}

\begin{thm}\label{jemsthm}
Suppose that the $G$-action on $X$ is $2$-transitive, or more
generally satisfies condition $(*)$ below. Then, for $l$ a linear
form with sufficiently general coefficients, the form $f=N_{K/\Q}(l)$
fails to be $\Q$-sos.
\end{thm}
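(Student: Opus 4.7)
The plan is to derive a contradiction from the assumption that, for $l$ sufficiently general, $f=N_{K/\Q}(l)$ is $\Q$-sos, by controlling all of its sum-of-squares representations and showing that none can be $\Q$-rational under condition $(*)$.

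First, I would classify the minimum-rank real sum-of-squares representations of $f$ via its complex factorization $f=\prod_{\sigma\in X}\sigma(l)$. For each \emph{transversal} $S\subset X$ of the $\tau$-pairing (a subset with $|S|=d$ and $S\cap\tau S=\emptyset$), the product $h_S:=\prod_{\sigma\in S}\sigma(l)$ satisfies $h_S\cdot\ol{h_S}=f$, so that $f=\Re(h_S)^2+\Im(h_S)^2$ is a rank-$2$ SOS decomposition over $\R$. Since $h_S$ and $h_{X\setminus S}=\ol{h_S}$ yield the same real decomposition, one obtains $2^{d-1}$ such decompositions, and for generic $l$ these should exhaust the rank-$2$ representations and form the extreme points of the rank-$2$ stratum of the Gram spectrahedron $\Gram(f)$.

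Next I would pass to the Gram-spectrahedral viewpoint: a hypothetical $\Q$-sos representation $f=\sum_j q_j^2$ with $q_j\in\Q[x_1,x_2,x_3]_d$ is equivalent to a $\Q$-rational point $\mathscr G\in\Gram(f)$. By Krein--Milman applied to the compact real Gram spectrahedron, $\mathscr G$ is a convex combination of extreme points; under the genericity assumption these are precisely the rank-$2$ matrices $\mathscr G_S$, on which $G$ acts through its action on $X$. Rationality of $\mathscr G$ then forces the coefficients in this convex combination to be constant on $G$-orbits of unordered pairs $\{S,X\setminus S\}$, so $\mathscr G$ is realized as a $G$-invariant positive combination of the $\mathscr G_S$.

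The main obstacle is to rule out all such $G$-invariant positive combinations under condition $(*)$. I would translate $(*)$ into a combinatorial statement about $G$-orbits on transversals---heuristically, that every orbit is rich enough that the orbit-sum $\sum_{S\in O}\mathscr G_S$ fails to be PSD for $l$ in a dense open set, perhaps because the orbit couples too many independent directions in $\R[x_1,x_2,x_3]_d$. Verifying that $(*)$ does provide this obstruction, and that the genericity on $l$ actually suffices to exclude all boundary phenomena in the spectrahedron, is the technically demanding step. Once it is accomplished, $\Gram(f)$ has no $\Q$-point and therefore $f$ is not $\Q$-sos.
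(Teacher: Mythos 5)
Your proposal takes a genuinely different route from the paper's, but it has several gaps that would be difficult to close, and at least one step is acknowledged by you as unverified. The paper's own argument (which is the proof of the more general Theorem~\ref{suff}, of which Theorem~\ref{jemsthm} is the special case $c(G,X,\tau)=|X|-1$) is much more elementary and bypasses the Gram-spectrahedral machinery entirely. It proceeds as follows: $f$ has real zeros at the $d$ intersection points $Q_0=\{L_i\cap\ol{L_i}\}$, so in any putative decomposition $f=\sum_\nu p_\nu^2$ with $p_\nu\in\Q[x]_d$, each $p_\nu$ must vanish on $Q_0$; but since the $p_\nu$ have \emph{rational} coefficients, they vanish on the entire Galois orbit $G\cdot Q_0$. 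Condition $(*)$ (indeed already $(**)$) guarantees that each line $L_j$ meets this orbit in at least $d+1$ points, so each degree-$d$ form $p_\nu$ vanishes identically on every $L_j$ and hence on $\prod_jl_j$, forcing $p_\nu=0$. The contradiction is immediate.

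Compared to this, your approach has the following problems. First, the claim that the rank-$2$ matrices $\mathscr G_S$ (one for each transversal $S$ of the $\tau$-pairing) exhaust the extreme points of $\Gram(f)$ is not established, and is almost certainly false: the Gram spectrahedron of a form on the boundary of $\Sigma_{2d}$ can have extreme points of many different ranks, not just the minimum rank. Without this, the Krein--Milman decomposition you invoke contains extreme points you have not controlled. Second, even granting that a rational point $\mathscr G\in\Gram(f)$ is a convex combination of the $\mathscr G_S$, the representation as such a combination is not unique (the $\mathscr G_S$ may be affinely dependent), so one cannot directly read off that the weights must be $G$-invariant from the fact that $\mathscr G$ itself is Galois-fixed. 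Third---and this you acknowledge explicitly---the crucial step of translating condition $(*)$ into an obstruction against all $G$-invariant positive combinations is left as a heuristic. This is not a small gap to be filled in later; it is where the number-theoretic input must enter the proof, and your proposal gives no mechanism for it. The paper's argument localizes that input in a single, very concrete way: Galois invariance of the summands $p_\nu$ forces them to vanish at all $G$-conjugates of the real zeros, and condition $(*)$/$(**)$ counts how many such conjugates lie on each line.

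In short: your approach is creative and the factorization $f=h_S\ol{h_S}$ giving $2^{d-1}$ rank-$2$ decompositions is a correct and interesting observation, but the plan does not close. If you want to pursue it, you would need to either prove that the $\mathscr G_S$ are the \emph{only} extreme points under your genericity hypothesis, or find a different way to constrain a rational Gram point. It is considerably simpler to exploit the real zeros of $f$ directly as the paper does.
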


``Sufficiently general coefficients'' means that no three of the $2d$
Galois conjugates of $l$ have a common nontrivial (complex) zero. For
example, when $\alpha$ is a primitive element for $K/\Q$, the form
$l=x_1+\alpha x_2+\alpha^2x_3$ has sufficiently general coefficients
in this sense. Condition $(*)$ is the following condition on the
Galois action on~$X$:
\begin{itemize}
\item[$(*)$]
\emph{For any $x,\,y\in X$ with $x\ne y$ there exists $z\in X$ and
$\sigma\in G$ such that $x=\sigma z$ and $y=\sigma\tau z$.}
\end{itemize}
Condition $(*)$ requires, in other words, that every $2$-element
subset $\{x,y\}\subset X$ is $G$-conjugate to a subset of the form
$\{z,\tau z\}$ with $z\in X$. This is a weaker condition than
$2$-transitivity, and is in fact strictly weaker (see \cite{sch:rat}
Remark 2.9 or \ref{empdata} below).

\begin{lab}\label{charnumber}
We are going to show that Theorem \ref{jemsthm} remains true under
a condition that is still much more general than condition $(*)$. To
this end let $G$ be a finite group, and let $X$ be a transitive and
faithful $G$-set (i.e.\ only $1\in G$ acts as the identity). Let
$t\in G$ be a fixed-point-free involution, i.e.\ $t$ acts on $X$
without fixed point and satisfies $t^2=1$. This forces $|X|$ to be
even. For $x\in X$ let
\begin{align*}
M_t(x) & =\ \{y\in X\colon\ex z\in X,\ \ex g\in G\text{ such that }
  x=gz,\ y=gtz\} \\
& =\ \{gtg^{-1}x\colon g\in G\},
\end{align*}
be the ``orbit'' of $x$ under the conjugacy class of $t$ in $G$. It
is easy to see that $M_t(hx)=hM_t(x)$ for any $h\in G$, $x\in X$,
and that $x\notin M_t(x)$.
Therefore the cardinality $|M_t(x)|$ is independent of $x\in M$ and
depends only on the conjugacy class of $t$ in $G$. We write
$c(G,X,t):=|M_t(x)|$ and call this the \emph{characteristic number}
of the triple $(G,X,t)$.

Property $(*)$ above says $M_t(x)=X\setminus\{x\}$ for $x\in X$, or
in other words, $(*)$ says $c(G,X,t)=|X|-1$.
\end{lab}

\begin{dfn}
Let the finite group $G$ act transitively and faithfully on the set
$X$, and let $t\in G$ be an involution without fixed points in $X$.
We say that the triple $(G,X,t)$ has property $(**)$, if
$c(G,X,t)>\frac12|X|$.
\end{dfn}

Clearly, property $(*)$ implies property $(**)$.

We return to the setting in \ref{recall}. So let again $K\subset\C$
be a totally imaginary field extension of $\Q$, of degree
$[K:\Q]=2d\ge4$ and with Galois hull $K^\gal/\Q$. We consider the
action of $G=\Gal(E/\Q)$ on $X=\Hom(K,\C)$. Let $\tau\in G$ be
complex conjugation. Theorem \ref{jemsthm} holds when condition
$(*)$ gets replaced by the weaker condition~$(**)$:

\begin{thm}\label{suff}
Suppose that the triple $(G,X,\tau)$ satisfies condition $(**)$.
Then, for $l$ a linear form with sufficiently general coefficients,
the form $f=N_{K/\Q}(l)$ fails to be $\Q$-sos.
\end{thm}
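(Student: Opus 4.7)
The plan is to follow the proof of Theorem~\ref{jemsthm} from \cite{sch:rat} step by step, locate the unique point at which condition $(*)$ is invoked, and show that the strictly weaker hypothesis $(**)$ already suffices there. All the steps that do not involve $(*)$ should carry over essentially verbatim.

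First, I assume for contradiction that $f = \sum_{i=1}^{r} p_i^2$ with $p_i \in \Q[x_1,x_2,x_3]$ homogeneous of degree~$d$. Base-changing to $K^\gal$ and using the factorization $f = \prod_{x \in X} l_x$, the sufficient generality of~$l$ forces $V(f) \subset \P^2$ to have only ordinary nodes as singularities, at the $\binom{2d}{2}$ pairwise intersections of the lines $V(l_x)$. Exactly $d$ of these nodes---the points $V(l_x) \cap V(l_{\tau x})$---are $\R$-rational and hence real zeros of~$f$, so each $p_i$ must vanish there. Together with Galois equivariance, this confines the Gram matrix of the decomposition to a distinguished $\Q$-linear subspace on which $G = \Gal(K^\gal/\Q)$ acts; the $\Q$-sos decompositions correspond exactly to the positive semidefinite, $G$-fixed elements of this subspace.

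Next, the heart of \cite{sch:rat} is the analysis of this $G$-fixed Gram space through the orbit structure of $G$ on ordered pairs from~$X$. Under condition $(*)$, that structure collapses to a single non-diagonal orbit---namely that of $\{z,\tau z\}$---so the $G$-fixed part becomes so constrained that positivity fails immediately. To replace $(*)$ by $(**)$ I would decompose any $G$-invariant Gram matrix as a real combination of orbit-sum matrices $A_O$, one per $G$-orbit of ordered pairs. The orbit of $\tau$-pairs then contributes $A_\tau$ with constant row sum $c(G,X,\tau) > |X|/2$, while the remaining orbit-sums together have row sums strictly less than $|X|/2$. A Gershgorin or Perron--Frobenius argument should then show that the coefficients rigidly prescribed by the identity $f = \prod_x l_x$ are incompatible with positive semidefiniteness, producing the desired contradiction.

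The main obstacle lies exactly in that final spectral step: verifying that $|X|/2$ is precisely the threshold that matches the Gram constraints, and ensuring that no accidental cancellation among the non-$\tau$ orbit-sums undermines the spectral bound. Once that step is established, the remainder of the proof is bookkeeping along the lines of \cite{sch:rat}.
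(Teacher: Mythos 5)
Your proposal diverges from the paper's argument at exactly the point where you set up the ``$G$-fixed Gram space'' and invoke orbit-sum matrices, and that is where a genuine gap opens up. The spectral step you describe --- combining Gershgorin or Perron--Frobenius with the threshold $c(G,X,\tau)>|X|/2$ --- is not worked out, and I do not see how it could be: the orbit-sum matrices $A_O$ live in the group algebra picture for the $G$-action on pairs from $X$, while the Gram matrix of a putative $\Q$-sos decomposition is a matrix indexed by degree-$d$ monomials and is already rational (hence trivially $G$-fixed). You have not explained how the row-sum bound on $A_\tau$ would constrain that Gram matrix, and the acknowledgment that this is the ``main obstacle'' means the proof is incomplete precisely at the one place where $(**)$ must do its work.

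The paper's actual proof is far more elementary and does not mention Gram matrices at all. Suppose $f=\sum_\nu p_\nu^2$ with $p_\nu\in\Q[x]_d$. Let $L_1,\dots,L_{2d}\subset\P^2(\C)$ be the lines cut out by the conjugates of $l$, let $Q$ be the $\binom{2d}{2}$ intersection points, and let $Q_0\subset Q$ be the $d$ real points $L_i\cap\overline{L_i}$. Since $f$ vanishes on $Q_0$ and each $p_\nu$ is real, $p_\nu$ vanishes on $Q_0$; since each $p_\nu$ has $\Q$-coefficients it is $G$-invariant, so $p_\nu$ vanishes on the entire $G$-orbit of $Q_0$ inside $Q$. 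Under the identification of $Q$ with the $2$-element subsets of $X$, condition $(**)$ says exactly that this $G$-orbit meets each line $L_j$ in at least $d+1$ points (that is $c(G,X,\tau)\ge d+1$). A degree-$d$ form vanishing at $\ge d+1$ points of a line vanishes on that line, so $p_\nu$ is divisible by every $l_j$; as $\deg p_\nu=d<2d$ this forces $p_\nu=0$, a contradiction. Your early steps (real nodes give zeros of $p_\nu$, Galois equivariance propagates them) are exactly right and are all that is needed --- the missing move is to count the propagated zeros on a single line and compare with $\deg p_\nu$, rather than passing to a Gram/spectral argument.
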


\begin{proof}
Let $l\in K[x_1,x_2,x_3]$ be a linear form, let $l_1,\dots,l_{2d}$ be
its $G$-conjugates, and assume that no three of them have a common
nontrivial complex zero. For $i=1,\dots,2d$ let
$L_i\subset\P^2(\C)$ be the projective zero set of $l_i$. For
$i\ne j$ in $\{1,\dots,2d\}$ let $M_{ij}=L_i\cap L_j$, the
intersection point of $L_i$ and $L_j$. Let $Q:=\{M_{ij}\colon1\le
i<j\le2d\}$, and let $Q_0\subset Q$ be the subset of all real points
in $Q$. By the general position assumption, $Q_0$ consists just of
the $d$ intersection points $L_i\cap\ol L_i$ ($1\le i\le2d$).

As in \cite{sch:rat}, we can identify the $2$-element subsets of $X$
with the points in $Q$, by letting $\{i,j\}\subset X$ correspond to
$M_{ij}\in Q$. This identifies the subsets $\{x,\tau x\}$ ($x\in X$)
with the points in $Q_0$. Condition $(**)$ therefore says
that one (in fact, any) of the lines $L_1,\dots,L_{2d}$ contains at
least $d+1$ points that are $G$-conjugate to a point in $Q_0$. Assume
that $f$ is $\Q$-sos, i.e.\ $f=\sum_\nu p_\nu^2$ with forms
$p_\nu\in\Q[x_1,x_2,x_3]$. Since $f(\xi)=0$ for every $\xi\in Q_0$,
we have $p_\nu(\xi)=0$ for every $\nu$ and every $\xi\in Q_0$.
Therefore every $p_\nu$ vanishes in at least $d+1$ different points
of every line $L_j$. Since $\deg(p_\nu)=d$, this implies that the
$p_\nu$ vanish identically on each line $L_j$, implying $p_\nu=0$,
contradiction.
\end{proof}

\begin{rem}\label{weaklysharp}
In a weak sense at least, condition $(**)$ is sharp for Theorem
\ref{suff}. Indeed, $(**)$ requires $c(G,X,t)\ge d+1$. If we allow
$c(G,X,t)=d$, then Theorem \ref{suff} fails in general. An example
showing this is provided by the dihedral group $G$ of order~$8$,
acting on the vertices $X$ of a square by symmetries of the square
(so $2d=4$ here). If $t\in G$ is one of the two fixed-point-free
reflections then $M_t(x)$ consists of the two vertices adjacent to
the vertex $x\in X$, and so $c(G,X,t)=2=d$. But when $K/\Q$ is an
extension with $[K:\Q]=4$ and $\Gal(K^\gal/\Q)\cong G$,
the construction in \ref{recall} always produces forms that are
$\Q$-sos. Indeed, this is a consequence of \cite{sch:rat}
Theorem~4.1.
\end{rem}

\begin{lab}
To complete this discussion, we add some empirical observations. We
consider finite transitive (faithful) group actions $(G,X)$ up to
isomorphism, i.e.\ $G$ is a finite group that acts transitively and
faithfully on the set $X$.
The notion of isomorphism $(G_1,X_1)\to(G_2,X_2)$ is obvious. For
small degrees, the isomorphism classes of such transitive group
actions are well known, and corresponding data is both available on
the web and implemented in computer algebra systems. For example, the
transitive permutation groups of degree $\le30$ are contained in a
Magma database \cite{mag}.

Given a finite extension $K\subset\C$ of $\Q$ with Galois closure
$K^\gal\subset\C$, we'll say that $K$ realizes the transitive group
$(G,X)$ if there is an isomorphism $(G,X)\isoto(\Gal(K^\gal/\Q),\>
\Hom(K,\C))$. Given moreover an involution $t\in G$, we say that $K$
realizes the triple $(G,X,t)$ if such an isomorphism can be found
that sends $t$ to (the restriction of) complex conjugation. Of
course, the realization question for $(G,X)$ is a strong version of
the inverse Galois problem, that becomes even stronger when an
involution $t\in G$ is fixed. Since we are interested in the group
actions of totally imaginary number fields, we only consider
transitive groups $(G,X)$ that contain a fixed-point-free
(\emph{fpf}) involution.
\end{lab}

\begin{lab}\label{empdata}
The following information was obtained with the help of the Magma
computer algebra system \cite{mag}. For $2d=4$ there are 5 transitive
groups $(G,X)$ (up to isomorphism). Only the $2$-transitive groups
$S_4$ and $A_4$ satisfy condition $(**)$.

For $2d=6$ there are 16 transitive groups $(G,X)$, $11$ of which
contain a fpf involution. Two of these $11$ are $2$-transitive. Two
other groups contain a fpf involution which satisfies $(*)$, namely
the transitive groups $6\T8$ and $6\T11$. (The first of them was
already discussed in \cite{sch:rat} 2.9). No further group satisfies
$(**)$.

For $2d=8$ we have the first examples of transitive groups which
satisfy $(**)$, but not $(*)$, with respect to some involution.
Note that in general, a transitive group $(G,X)$ contains several
conjugacy classes of fpf involutions $t$, whose characteristic
numbers $c(G,X,t)$ will usually be different.

For $2d=10$ or $14$, condition $(**)$ is not more general than
$(*)$.
But in degrees $12,\,16,\,18$ and $20$ there are many
transitive groups that have at least one involution satisfying
$(**)$, but no involution satisfying $(*)$. Combined with the
remarks in \ref{realization} below, this shows that we gain quite
a~bit of new examples with Theorem \ref{suff}. A precise statistics
up to degree~20 is provided in the following table:
$$\begin{array}{r|rrrr}
n & (1) & (2) & (3) & (4) \\
\hline
 4 &    5 &  2 &   - &   - \\
 6 &   11 &  2 &   2 &   - \\
 8 &   50 &  7 &   2 &   3 \\
10 &   27 &  3 &   6 &   - \\
12 &  282 &  5 &  21 &  50 \\
14 &   44 &  2 &   9 &   - \\
16 & 1954 & 13 &  35 & 120 \\
18 &  678 &  2 &  83 & 132 \\
20 & 1020 &  4 & 126 & 197
\end{array}$$
Here row $n$ contains the numbers of (isomorphism classes of)
transitive groups $(G,X)$ with $|X|=n$ that
\begin{enumerate}
\item
contain a fpf involution,
\item
contain a fpf involution and are $2$-transitive,
\item
satisfy $(*)$ for some fpf involution but are not $2$-transitive,
\item
satisfy $(**)$ for some fpf involution, but don't satisfy $(*)$ for
any fpf involution.
\end{enumerate}
\end{lab}

\begin{lab}\label{realization}
By consulting the Kl\"uners-Malle data base for number fields
(\cite{dnf}, see also \cite{km}), one can verify that for every
transitive group $(G,X)$ of degree $|X|\le16$ and every fpf
involution $t\in G$, the triple $(G,X,t)$ is realized by some number
field $K/\Q$.
We are grateful to J\"urgen Kl\"uners for confirming to us this last
assertion. A list of Galois realizations of all triples $(G,X,t)$
with $|X|\le16$ that satisfy $(**)$, extracted from \cite{dnf}, is
available under \cite{zdata}.
\end{lab}


\section{Strictly positive forms in Hilbert's sos cones}

\begin{lab}
We consider Open Problem~5.1 from \cite{sch:rat}. Let
$f\in\Q[x]=\Q[x_1,\dots,x_n]$ be a form of degree $\deg(f)=2d$ which
is $\R$-sos, and assume that $f$ is strictly positive (i.e.\ $f(a)>0$
for $0\ne a\in\R^n$). Then, does it follow that $f$ is $\Q$-sos?

Although we expect a negative answer in general, no argument or
example is known so far to decide this question in general. Here we
are looking at the first open cases. When $n\le2$ or $2d=2$, $f$ is
clearly $\Q$-sos, and the same is true for $(n,2d)=(3,4)$ according
to \cite{sch:rat} Theorem 4.1. We therefore consider the cases
$(n,2d)=(3,6)$ or $(4,4)$. They may be called the Hilbert cases,
alluding to Hilbert's celebrated 1888 theorem \cite{hi}, by which
$(3,6)$ and $(4,4)$ are the minimal cases for which there exist
nonnegative forms (over $\R$) that are not sums of squares.

In all that follows, the two cases $(3,6)$ and $(4,4)$ are completely
parallel. For simplicity we will focus on the $(3,6)$ case, and will
point out at the end how to adapt the results to the $(4,4)$ case.
Our main result in the $(3,6)$ case is Theorem \ref{posternsextq}
below. Roughly it says that if there exists strictly positive $f$
over~$\Q$ which is $\R$-sos but not $\Q$-sos, then $f$ lies in a thin
subset of the boundary of the sos cone.
\end{lab}

\begin{lab}
Let $x=(x_1,x_2,x_3)$, and consider the polynomial ring
$A=\R[x]=\R[x_1,x_2,x_3]$ with the natural grading
$A=\bigoplus_{d\ge0}A_d$. We say that a form $f\in A$ is
\emph{strictly positive}, denoted $f>0$, if $f(a)>0$ for any
$0\ne a\in\R^3$. Recall that $A_d$ is a finite-dimensional real
vector space and thus carries a unique topology. For even $d\ge0$
let $\Sigma_d\subset A_d$ be the set of all sums of squares, a
closed convex cone of full dimension (i.e.\ with non-empty interior
relative to $A_d$).

Let $f\in\Q[x]=\Q[x_1,x_2,x_3]$ be a strictly positive form of
degree~$6$ which is $\R$-sos, i.e.\ $f\in\Q[x]_6\cap\Sigma_6$.
If $f$ lies in the interior of $\Sigma_6$ then $f$ is a sum of
squares over $\Q$ (\cite{hr} Theorem 1.2 or \cite{sch:rat} Lemma
4.6). So we assume that $f$ lies on the boundary $\partial\Sigma_6$.
The Zariski closure $\partial^a\Sigma_6$ of $\partial\Sigma_6$ is
called the \emph{algebraic boundary} of $\Sigma_6$, and is known to
be a union of two irreducible hypersurfaces inside the space $A_6$.
Namely
$$\partial^a\Sigma_6\>=\>\Delta\cup V$$
where $\Delta$ is the discriminant hypersurface, consisting of all
forms with at least one (complex) singularity, and $V$ is the Zariski
closure of the sets of all sums of three squares of forms. The degree
of $\Delta$ resp.\ $V$ is $75$ resp.\ $83200$. See \cite{bhors} for
proofs of these facts (we will not make use of the precise degrees).
\end{lab}

\begin{lab}
For basic notions from convexity we refer to standard texts like
\cite{we} or \cite{ro}.
Let $A_6^\du=\Hom(A_6,\R)$ be the dual space of the linear space
$A_6$, and let $\Sigma_6^*=\{\alpha\in A_6^\du\colon\alpha(\Sigma_6)
\ge0\}$, the dual cone of $\Sigma_6$. Given $f\in\Sigma_6$, the
\emph{normal cone} of $\Sigma_6$ at $f$ is
$$N_f\>=\>N_f(\Sigma_6)\>=\>\{\alpha\in\Sigma_6^*\colon
\alpha(f)=0\},$$
a closed convex cone contained in $\Sigma_6^*$. For
$f\in\partial\Sigma_6$ we have $N_f\ne\{0\}$. Moreover, then, $N_f$
is the (closed) convex cone generated by the extreme rays
$\R_\plus\alpha$ of $\Sigma_6^*$ satisfying $\alpha(f)=0$, according
to the Krein-Milman theorem for closed pointed convex cones.
\end{lab}

\begin{lab}\label{ualpha}
Let $f\in\partial\Sigma_6$ be strictly positive. Let us recall how
Blekherman \cite{bl} proves that $f$ is a sum of three squares of
forms. Let $\alpha\in N_f$ span an extreme ray. Since $f$ is strictly
positive, $\alpha$ cannot be evaluation in a point $u\in\R^3$.
The symmetric bilinear form
$$b_\alpha\colon A_3\times A_3\to\R,\quad(p,q)\mapsto\alpha(pq)$$
is positive semidefinite. Let $U_\alpha\subset A_3$ be its kernel, so
$$U_\alpha\>=\>\{p\in A_3\colon pA_3\subset\ker(\alpha)\}\>=\>
\{p\in A_3\colon\alpha(p^2)=0\}.$$
By \cite{bl}, Corollary 2.3 and Lemma 2.4,
the forms in $U_\alpha$ have no common (real or complex) projective
zero in $\P^2$, so the projective zero set $V(U_\alpha)$ is empty.

By assumption, $f$ has at least one sums of squares representation
$f=\sum_{i=1}^rp_i^2$ (with $p_i\in A_3$). For any such identity we
have $\alpha(p_i^2)=0$ for all~$i$, so the $p_i$ lie in $U_\alpha$.
According to \cite{bl} Theorem 2.7, the linear space $U_\alpha$ has
dimension~$3$.
Therefore $f$ can be written as a sum of 3 squares. In particular,
any strictly positive $f\in\partial\Sigma_6$ lies in the
hypersurface~$V$.

Moreover, Blekherman proves that $f$ is not a sum of two squares
(\cite{bl} Corollary~1.3). Hence for every sum of squares
representation $f=\sum_{i=1}^rp_i^2$ (with $p_i\in A_3$), the linear
span of the forms $p_1,\dots,p_r$ is equal to $U_\alpha$. By Lemma
\ref{2repsameu} (see Appendix), this implies that $f$ has essentially
only one such representation:
\end{lab}

\begin{cor}\label{unique}
Let $f\in\partial\Sigma_6$ be strictly positive. Then $f$ is a sum of
three squares, and up to orthogonal equivalence, there is only one
sum of squares representation of~$f$.
\qed
\end{cor}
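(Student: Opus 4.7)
The plan is that both assertions follow rather directly from the machinery prepared in \ref{ualpha}; the proof amounts to collecting those ingredients. First, I fix any extreme ray $\R_\plus\alpha$ of $N_f$ and recall that the summands of every sos representation $f=\sum_{i=1}^r p_i^2$ lie in the three-dimensional space $U_\alpha$. A standard Gram-matrix diagonalization (write $p_i=\sum_k a_{ik}u_k$ in a basis $u_1,u_2,u_3$ of $U_\alpha$, observe that the matrix $G=(a_{ik})^\T(a_{ik})$ is positive semidefinite of rank at most~$3$, and diagonalize it) then rewrites $f$ as a sum of at most three squares of forms, yielding the first assertion.

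For the uniqueness, I would consider two sos decompositions $f=\sum_i p_i^2=\sum_j q_j^2$. By the preceding, both $\spn(p_i)$ and $\spn(q_j)$ are contained in $U_\alpha$. If either had dimension at most two, the same Gram reduction would present $f$ as a sum of two squares, contradicting Blekherman's Corollary~1.3 recalled in \ref{ualpha}. Hence both spans coincide with the whole three-dimensional space $U_\alpha$, and Lemma~\ref{2repsameu} from the appendix — which asserts that any two sos representations sharing the same linear span differ by an orthogonal change of basis — finishes the argument.

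The one delicate point I expect to verify with care is the implication ``$\spn(p_i)$ has dimension at most two $\To$ $f$ is a sum of two squares''. It is precisely Blekherman's negative result that bridges from the inclusion $\spn(p_i)\subset U_\alpha$ to the equality $\spn(p_i)=U_\alpha$ needed to invoke Lemma~\ref{2repsameu}; without it the spans of two different representations could sit as distinct two-dimensional planes inside $U_\alpha$ with no orthogonal relation to each other. Beyond this checkpoint, the corollary is a bookkeeping consequence of the analysis already carried out above, so no further obstacle is anticipated.
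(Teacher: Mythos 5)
The overall architecture matches the paper's: one passes through the fixed extreme ray $\alpha\in N_f$, the inclusion of all summand spans in the three-dimensional $U_\alpha$, the Gram reduction to at most three squares, and Blekherman's ``no representation as a sum of two squares'' to pin every span down to exactly $U_\alpha$. That part is fine and is precisely what \ref{ualpha} prepares.

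The gap is in your reading of Lemma~\ref{2repsameu}. You state that it ``asserts that any two sos representations sharing the same linear span differ by an orthogonal change of basis.'' That is not what the lemma says, and as a general statement it is false: two sos representations can easily share the same span without being orthogonally equivalent. What Lemma~\ref{2repsameu} actually asserts is the following dichotomy: if $f=\sum p_i^2=\sum q_i^2$ are two \emph{non-equivalent} representations with the same span $U$, then $f$ admits yet another sos representation whose span is a \emph{proper subspace} of $U$. (The proof is via an extreme point of the face of $\Gram(f)$ containing both.) To finish the corollary one must therefore argue by contradiction: suppose two non-equivalent representations existed. By your step~(1) both spans equal $U_\alpha$, so the lemma produces a representation with span of dimension $\le 2$, hence presents $f$ as a sum of two squares --- contradicting Blekherman's Corollary~1.3. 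With the lemma stated correctly and this contradiction spelled out, your proof coincides with the paper's; as written, the uniqueness step relies on a claim the lemma does not make.
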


In particular, the Gram spectrahedron of $f$ is reduced to a single
point.

\begin{lab}\label{gorenstein}
We keep assuming that $f\in\partial\Sigma_6$ is strictly positive,
and that $\alpha\in N_f$ spans an extreme ray. Let $U_\alpha$ be the
kernel of $b_\alpha$, as in \ref{ualpha}, and let $I\subset A$ be the
(homogeneous) ideal generated by $U_\alpha$. Since $V(I)=\emptyset$
and $\dim(U_\alpha)=3$, the ideal $I$ is a complete intersection.
Therefore, according to \cite{egh} Theorem~CB8, the graded ring $A/I$
is a $0$-dimensional Gorenstein ring with socle degree~$6$,
see also \cite{bl} Theorem 2.5. In particular, $\ker(\alpha)\subset
A_6$ is the degree~$6$ part of $I$, i.e.\ $\ker(\alpha)=I_6=
U_\alpha A_3$.
\end{lab}

\begin{cor}\label{normalcone}
For every strictly positive form $f$ in $\partial\Sigma_6$, the
normal cone $N_f(\Sigma_6)$ has dimension one, i.e.\ it is a single
ray.
\end{cor}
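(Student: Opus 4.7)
The plan is to show that any two extreme rays of $N_f$ coincide; since by the Krein--Milman theorem $N_f$ is the closed convex cone generated by its extreme rays, this will force $N_f$ itself to be a single ray.

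First I would pick two $\alpha,\alpha'\in N_f$, each spanning an extreme ray of $\Sigma_6^*$, and show that their kernels $U_\alpha,U_{\alpha'}\subset A_3$ (as in \ref{ualpha}) necessarily agree. The key input is the observation recalled in \ref{ualpha}: since $f$ is strictly positive and, by Blekherman, not a sum of two squares, for any sum-of-squares representation $f=\sum_i p_i^2$ with $p_i\in A_3$, the linear span of the $p_i$ equals $U_\alpha$, and by the same reasoning applied to $\alpha'$, it equals $U_{\alpha'}$. Hence $U_\alpha=U_{\alpha'}$.

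Next I would transfer this equality back to the linear functionals themselves, using the Gorenstein statement from \ref{gorenstein}: $\ker(\alpha)=U_\alpha\cdot A_3$ and $\ker(\alpha')=U_{\alpha'}\cdot A_3$, so the two kernels agree as hyperplanes in $A_6$. Two nonzero elements of $A_6^\du$ with the same kernel are proportional, and since both $\alpha$ and $\alpha'$ lie in $\Sigma_6^*$, which spans $A_6^\du$ because $\Sigma_6$ has non-empty interior, the scalar must be positive. Therefore $\R_\plus\alpha=\R_\plus\alpha'$.

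Since every pair of extreme rays in $N_f$ coincides, $N_f$ has at most one extreme ray; as $f\in\partial\Sigma_6$ forces $N_f\neq\{0\}$, it has exactly one, and $N_f$ is that ray. I do not anticipate a serious obstacle: the only subtlety is making sure that the uniqueness of the $U_\alpha$ (provided in the preceding discussion) is applied to two different extreme functionals simultaneously, which is immediate once phrased as above.
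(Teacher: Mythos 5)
Your argument is exactly the paper's proof, just written out in more detail: both reduce to showing any two extreme rays $\R_\plus\alpha,\R_\plus\alpha'$ of $N_f$ coincide (via Krein--Milman, cited in the preceding paragraph of the paper), both use the discussion in \ref{ualpha} to get $U_\alpha=U_{\alpha'}$, and both pass to $\ker(\alpha)=\ker(\alpha')$ via the Gorenstein identity $\ker(\alpha)=U_\alpha A_3$ from \ref{gorenstein} to conclude proportionality with a positive scalar. The proposal is correct and follows the same route.
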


\begin{proof}
Let $\R_\plus\alpha$, $\R_\plus\beta$ be two extreme rays contained
in $N_f$. It suffices to show that both are equal.
By the discussion in \ref{ualpha} we have $U_\alpha=U_\beta$.
But this implies $\ker(\alpha)=\ker(\beta)$, so $\alpha$ and $\beta$
are positive scalar multiples of each other.
\end{proof}

\begin{cor}\label{tangspace}
Let $f\in\partial\Sigma_6$ be strictly positive, and assume that $f$
is a nonsingular point of the hypersurface $V$. Let $\alpha\in
A_6^\du$ span the normal cone $N_f(\Sigma_6)$. Then the kernel of
$\alpha$ coincides with the tangent space to $V$ at~$f$.
\end{cor}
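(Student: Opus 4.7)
The plan is to identify $T_f V$ by realizing $V$ (up to Zariski closure) as the image of the parametrization
$$\phi\colon A_3^3\to A_6,\qquad (p_1,p_2,p_3)\mapsto p_1^2+p_2^2+p_3^2,$$
and then to compare $T_f V$ to $\ker(\alpha)$ using a dimension count. Both subspaces are hyperplanes in $A_6$: the kernel of a nonzero linear functional has codimension~$1$, and $f$ being a nonsingular point of the hypersurface $V$ makes $T_f V$ a hyperplane as well. Thus it suffices to show one inclusion.

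By Corollary \ref{unique} we may fix a representation $f=p_1^2+p_2^2+p_3^2$ with $p_i\in A_3$, and by the discussion in \ref{ualpha} the set $\{p_1,p_2,p_3\}$ is a basis of $U_\alpha$. Since $\phi$ maps into $V$, the Zariski tangent space $T_f V$ contains the image of the differential $d\phi$ at $(p_1,p_2,p_3)$. A routine computation gives
$$d\phi_{(p_1,p_2,p_3)}(q_1,q_2,q_3)\>=\>2\bigl(p_1 q_1+p_2 q_2+p_3 q_3\bigr),$$
so the image of $d\phi$ at this point equals $p_1 A_3+p_2 A_3+p_3 A_3=U_\alpha\cdot A_3$.

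By \ref{gorenstein}, the degree~$6$ part of the ideal generated by $U_\alpha$ is precisely $\ker(\alpha)$, i.e.\ $U_\alpha\cdot A_3=\ker(\alpha)$. Combining this with the previous step yields $\ker(\alpha)\subset T_f V$, and since both are hyperplanes in $A_6$ they must coincide. The only genuine input beyond the standard tangent-space formalism is the Gorenstein identification $I_6=\ker(\alpha)$ from \ref{gorenstein}; once that is in hand, the proof reduces to the elementary differential calculation above. The main conceptual point to keep in mind is that the image of a morphism lies in $V$, so its differential lands in the Zariski tangent space of~$V$ at the image point, which is exactly what is needed here.
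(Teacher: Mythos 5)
Your proof is correct and follows essentially the same route as the paper: both parametrize $V$ via $\phi(p_1,p_2,p_3)=p_1^2+p_2^2+p_3^2$, compute the image of $d\phi$ at the unique sos representation to get $U_\alpha A_3$, invoke the Gorenstein identification $\ker(\alpha)=U_\alpha A_3$, and conclude by the codimension-one count. There is no substantive difference.
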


\begin{proof}
Let $f=p_1^2+p_2^2+p_3^2$ be the unique sos representation of $f$.
Then $U_\alpha=\spn(p_1,p_2,p_3)$ and $\ker(\alpha)=U_\alpha A_3$
(\ref{ualpha}, \ref{unique}). Consider the morphism of algebraic
varieties (affine spaces)
$\phi\colon A_3\times A_3\times A_3\to V\subset A_6$,
$(q_1,q_2,q_3)\mapsto q_1^2+q_2^2+q_3^2$, and its tangent map at the
triple $(p_1,p_2,p_3)$. The image of this tangent map is
$p_1A_3+p_2A_3+p_3A_3=U_\alpha A_3\subset A_6$, and this subspace is
contained in $T_f(V)$. We conclude $\ker(\alpha)\subset T_f(V)$, and
equality must hold since both are codimension one subspaces of $A_6$.
\end{proof}

The following result summarizes most of what we discussed so far:

\begin{cor}\label{summary}
Let $f\in\partial\Sigma_6$ be strictly positive. Then the following
hold:
\begin{itemize}
\item[(a)]
There are $p_1,\,p_2,\,p_3\in A_3$, linearly independent, with
$f=p_1^2+p_2^2+p_3^2$, and up to orthogonal equivalence, this is the
only sum of squares representation of~$f$.
\end{itemize}
Let $U=\spn(p_1,p_2,p_3)\subset A_3$, let $I\subset A$ be the ideal
generated by $U$.
\begin{itemize}
\item[(b)]
The normal cone $N_f(\Sigma_6)$ is a ray: $N_f(\Sigma_6)=
\R_\plus\alpha$ for some~$\alpha$.
\item[(c)]
$A/I$ is a Gorenstein graded algebra with socle degree~$6$.
\item[(d)]
$I_6=\ker(\alpha)=UA_3$.
\end{itemize}
If in addition, $f$ is a nonsingular point of the hypersurface $V$,
then
\begin{itemize}
\item[(e)]
$I_6=\ker(\alpha)$ is the tangent space of $V$ at~$f$.
\qed
\end{itemize}
\end{cor}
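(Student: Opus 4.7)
The plan is to treat this as the summary corollary it clearly is, matching each of (a)--(e) to the appropriate item established earlier in the discussion and supplying only the one brief argument needed for linear independence in~(a).

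For~(a), I would invoke \ref{ualpha} for the existence of a three-square representation (this uses Blekherman's Theorem~2.7, which gives $\dim U_\alpha = 3$) and Corollary~\ref{unique} for uniqueness up to orthogonal equivalence. The only point not immediately covered is the linear independence of the $p_i$: I would argue that for any sos representation $f = \sum_{i=1}^r q_i^2$ the forms $q_i$ lie in $U_\alpha$ and, by Blekherman's Corollary~1.3 (since $f$ is not a sum of two squares), their linear span must equal $U_\alpha$; specialized to $r=3$, three vectors spanning the three-dimensional space $U_\alpha$ are necessarily linearly independent.

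For~(b) I would simply cite Corollary~\ref{normalcone}. Parts~(c) and~(d) are exactly what paragraph~\ref{gorenstein} records: since $\dim U_\alpha = 3$ and $V(U_\alpha) = \emptyset$, the ideal $I$ generated by $U=U_\alpha$ is a complete intersection, hence by \cite{egh}~Theorem~CB8 the graded ring $A/I$ is a zero-dimensional Gorenstein ring with socle degree~$6$; the identification $I_6 = \ker(\alpha) = UA_3$ was noted in the same paragraph (it follows from the characterization of $U_\alpha$ as the kernel of $b_\alpha$). Part~(e) is Corollary~\ref{tangspace} verbatim, with exactly the matching hypothesis of nonsingularity of $f$ on $V$.

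Since the statement is a bookkeeping corollary, there is no real obstacle to overcome: all substantive work was already done in \ref{ualpha}--\ref{tangspace}. My proof will therefore consist of five one-line citations, padded only by the short linear-independence argument for~(a) above, and can reasonably be replaced by the single word \emph{qed} that the authors in fact use.
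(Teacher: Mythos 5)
Your proposal is correct and matches the paper's approach exactly: the corollary is proved by assembling the results of \ref{ualpha}, \ref{unique}, \ref{normalcone}, \ref{gorenstein}, and \ref{tangspace}, which is precisely why the authors give it a bare \textnf{qed}. Your supplementary argument for linear independence in (a) — that the span of the $p_i$ equals the three-dimensional $U_\alpha$ because $f$ is not a sum of two squares, so three spanning vectors are independent — is the correct small gap-filler and is consistent with the reasoning in \ref{ualpha}.
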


Much of Corollary \ref{summary} was already known from \cite{bl}.
New are (b), (e) and the uniqueness part of~(a).

\begin{lab}\label{tangspaceratl}
The hypersurface $V\subset A_6$ is defined by a homogeneous
polynomial $F$ (of degree $83200$) in the $\choose82=28$
coefficients of a ternary sextic. Since (the complexification of) $V$
is the Zariski closure of all sums of three squares of cubic forms,
this hypersurface is defined over $\Q$. So the polynomial $F$ can be
taken to have coefficients in $\Q$.

Let $f\in\partial\Sigma_6$ be strictly positive, so $f\in V$, and
assume that $f$ has $\Q$-coefficients. By Corollary \ref{unique},
$f$ has a unique sos representation over $\R$. We are going to show
that this representation is defined over $\Q$, provided that $f$ is
a smooth point of the hypersurface~$V$.

Indeed, since $f$ has $\Q$-coefficients, the tangent space $T_f(V)
\subset A_6$ of $V$ at $f$ is the kernel of a linear form with
$\Q$-coefficients.
By Corollary \ref{summary}(e), we conclude that the normal cone
$N_f(\Sigma_6)$ is generated by a linear form $\alpha$ with
$\Q$-coefficients. Hence the $3$-dimensional subspace
$U_\alpha=\{p\in A_3\colon A_3p\subset\ker(\alpha)\}$ of $A_3$ has a
basis consisting of $\Q$-polynomials. According to Lemma \ref{basic}
in the Appendix, this implies that the unique matrix (or tensor) in
$\Gram(f)$ has $\Q$-coefficients. Thus, $f$ is a sum of squares
over~$\Q$. Altogether this proves:
\end{lab}

\begin{thm}\label{posternsextq}
Let $f\in\Sigma_6$ be a strictly positive form with coefficients
in $\Q$. If $f$ fails to be a sum of squares over $\Q$, then $f$ lies
in the boundary of $\Sigma_6$, and $f$ is a singular point of the
hypersurface $V$.
\qed
\end{thm}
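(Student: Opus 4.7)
I prove the contrapositive: assuming $f\in\Q[x]_6\cap\Sigma_6$ is strictly positive and either lies in the interior of $\Sigma_6$ or is a smooth point of the hypersurface $V$, I conclude that $f$ is $\Q$-sos. The interior case is already known (\cite{hr} Theorem 1.2 or \cite{sch:rat} Lemma 4.6), so the substantive case is $f\in\partial\Sigma_6$ smooth on $V$.

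Under these hypotheses, Corollary \ref{summary} supplies three structural facts: $f$ admits an essentially unique sos representation $f=p_1^2+p_2^2+p_3^2$ over $\R$; the normal cone $N_f(\Sigma_6)$ is a single ray $\R_\plus\alpha$; and, by smoothness of $V$ at $f$, $\ker(\alpha)=T_f(V)$. The crucial rationality input comes from $V$ itself: since $V$ is the Zariski closure of the image of the $\Q$-defined morphism $(A_3)^3\to A_6$, $(q_1,q_2,q_3)\mapsto\sum q_i^2$, the hypersurface $V$ is cut out by a polynomial $F$ with rational coefficients. As $f\in V(\Q)$ is a smooth point, the gradient $\nabla F(f)$ is a rational vector, so $T_f(V)$ is the kernel of a $\Q$-linear form. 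I therefore may take $\alpha\in A_6^\du$ with rational coefficients.

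Once $\alpha$ is rational, the subspace $U_\alpha=\{p\in A_3:pA_3\subset\ker(\alpha)\}$ is cut out of $A_3$ by linear equations over $\Q$, and hence admits a basis of rational cubic forms. Corollary \ref{unique} forces the Gram spectrahedron of $f$ to consist of a single point; via the correspondence between faces of the Gram spectrahedron and subspaces of $A_3$ set up in Lemma \ref{basic} of the Appendix, that point is determined by $U_\alpha$, and inherits its rationality. Thus $f$ has a Gram matrix with rational entries, yielding a $\Q$-sos decomposition.

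The main obstacle I anticipate is the last descent step. A priori, a rational $U_\alpha\subset A_3$ only forces the kernel of the associated positive semidefinite bilinear form on $A_3$ to be defined over $\Q$; extracting from this that the Gram \emph{tensor} of $f$ is itself rational is exactly the content of Lemma \ref{basic}, and this is where the essential descent argument sits. The preceding two reductions — from boundary point to smooth boundary point (where uniqueness applies) and from $T_f(V)$ rational to $\alpha$ rational — are clean once Corollary \ref{summary} is in hand.
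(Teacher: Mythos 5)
Your proposal is correct and follows essentially the same route as the paper's argument in \ref{tangspaceratl}: rationality of $V$ gives a rational tangent hyperplane at a smooth $\Q$-point $f$, hence $\alpha$ and $U_\alpha$ are defined over $\Q$, and Lemma \ref{basic} (applied to the unique, hence extreme, point of $\Gram(f)$ supplied by Corollary \ref{unique}) descends the Gram tensor to $\Q$. You correctly identify Lemma \ref{basic} as the locus of the descent; the only minor slip is attributing the face--subspace correspondence to that lemma rather than to \ref{dimfaces}, which does not affect the argument.
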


In view of this theorem, it would be very interesting to see a
characterization of the forms in $V$ that are singular as points
of~$V$. Unfortunately we don't know how to approach this question.
A direct computation of the singularities of $V$ seems hopeless due
to the complexity of the equation of $V$ (a homogeneous polynomial
in 28~variables of degree 83200).

\begin{rems}
\hfil\smallskip

1.\
Beware that a strictly positive form $f\in\Sigma_6$ which is a sum
of three squares will not in general lie on the boundary of
$\Sigma_6$. For example, the symmetric form $f=x_1^6+x_2^6+x_3^6$
is strictly positive, and it is easily seen that $f\in\text{int}
(\Sigma_6)$.
\smallskip

2.\
There is another aspect that makes the form $f=x_1^6+x_2^6+x_3^6$
interesting. Indeed, $f$ can be written as a sum of 3 squares in more
than one way, for example
$$f\>=\>(x_1^3-2x_1x_2^2)^2+(2x_1^2x_2-x_2^3)^2+x_3^6.$$
By Corollary \ref{unique}, this directly implies that $f$ lies in the
interior of $\Sigma_6$. Using the arguments from the proof of
\ref{tangspace}, we can also conclude that $f$ is a singular point
of~$V$.
\smallskip

3.\
According to Blekherman \cite{bl}, examples of strictly positive
forms in $\partial\Sigma_6$ can be constructed as follows. Let
$p_1,\,p_2\in A_3$ be two cubics which intersect transversely in
nine projective $\R$-points. For example, we may take
$p_1=x_1(x_1^2-x_3^2)$ and $p_2=x_2(x_2^2-x_3^2)$, intersecting in
nine points with affine representatives
$$\xi_1=(1,1,1),\quad\xi_2=(-1,1,1),\quad\xi_3=(1,-1,1),\quad
\xi_4=(1,1,-1),$$
$$\xi_5=(0,1,1),\quad\xi_6=(0,1,-1),\quad\xi_7=(1,0,1),\quad
\xi_8=(1,0,-1)$$
and
$$\xi_9=(0,0,1).$$
The Cayley-Bacharach relation is the unique (up to scaling) linear
relation between the nine values $p(\xi_i)$ ($1\le i\le9$) of a
general cubic~$p$. In our example it is $\sum_{i=1}^9u_ip(\xi_i)=0$
where
$$(u_1,\dots,u_9)\>=\>(1,1,1,1,-2,-2,-2,-2,4).$$
Following \cite{bl} Theorem 6.1 we consider 9-tuples
$a=(a_1,\dots,a_9)$ of nonzero real numbers with $a_i<0$ for
precisely one index $i$, that satisfy the relation
$\sum_{i=1}^9\frac{u_i^2}{a_i}=0$. For any such tuple $a$ the linear
form
$$\alpha\colon A_6\to\R,\quad\alpha(f)\>=\>\sum_{i=1}^9a_if(\xi_i)$$
is an extreme form in $\Sigma_6^*$ and is not evaluation in a point.

In our example, $a=(1,1,1,1,4,4,4,4,-2)$ is an example of such a
tuple, giving rise to $\alpha\in\Sigma_6^*$ as above. The kernel
$U_\alpha$ of the psd bilinear form $b_\alpha$ has $\dim(U_\alpha)=3$
and is spanned by $p_1,\,p_2$ and $p_3=(3x_1^2+3x_2^2-4x_3^2)x_3$.
For any three linearly independent forms $q_1,\,q_2,\,q_3$ in
$U_\alpha=\spn(p_1,p_2,p_3)$, the sextic $f=q_1^2+q_2^2+q_3^2$ is
strictly positive and lies in $\partial\Sigma_6$.
For example,
$$f\>=\>x_1^6+x_2^6+7(x_1^4+x_2^4)x_3^2+18x_1^2x_2^2x_3^2
-23(x_1^2+x_2^2)x_3^4+16x_3^6$$
is such a sextic, obtained by taking $q_i=p_i$ ($i=1,2,3$). According
to \ref{unique}, $f=p_1^2+p_2^2+p_3^2$ is, up to orthogonal
equivalence, the only sum of squares representation of~$f$ over~$\R$.

Instead of 9 real points of intersection, the two conics $p_1$ and
$p_2$ may also intersect in 7 real and one complex conjugate pair
of points. Then the $a_i$ have to satisfy a slightly different
condition, see \cite{bl} Theorem 7.1.
\end{rems}

\begin{lab}
The results and remarks in this section all carry over to the
$(4,4)$ case, i.e.\ forms $f(x_1,x_2,x_3,x_4)$ of degree~$4$, as
follows. Put $A=\R[x_1,x_2,x_3,x_4]$, let now $\Sigma_4$ denote the
sos cone in $A_4$. The Zariski closure of $\partial\Sigma_4$ is a
union $\Delta\cup V$ of two irreducible hypersurfaces, with $\Delta$
the discriminant (of degree~$108$), and $V$ (of degree~$38475$) the
Zariski closure of the sums of $4$ squares of quadratic forms
\cite{bhors}. For $f\in\partial\Sigma_4$ a strictly positive form,
the normal cone $N_f(\Sigma_4)$ is a ray, and $f$ has a unique sos
representation, which is of length~$4$. Defining the ideal
$I\subset A$ similarly to \ref{summary}, $A/I$ is Gorenstein of socle
degree~$4$. The argument in \ref{tangspaceratl} carries over (using
$4$ instead of $3$ squares), and so the analogue of Theorem
\ref{posternsextq} holds for $\Q$-forms in $\Sigma_4$. The proofs
resp.\ references are the same as in the $(3,6)$ case.
\end{lab}


\section{Background on Gram spectrahedra}

We give a brief introduction to Gram spectrahedra of forms here,
including proofs or references for a few basic facts that we are
using in Section~3. See \cite{sch:gram} for a more detailed account
of Gram spectrahedra in general.

\begin{lab}
Let $n\in\N$ and $A=\R[x_1,\dots,x_n]=\R[x]$, considered with the
usual grading $A=\bigoplus_{d\ge0}A_d$. Let $d\ge0$ and $f\in
A_{2d}$, let $X=(x_1^d,x_1^{d-1}x_2,\dots,x_n^d)$ be the list of
monomials of degree~$d$ in some fixed order, let $N=\choose{n-1+d}d$
be the number of these monomials. The \emph{Gram spectrahedron} of
$f$ is defined to be
$$\Gram(f)\>=\>\{G\in\bbS^N\colon G\succeq0,\ X^tGX=f\},$$
where $\bbS^N$ is the space of real symmetric $N\times N$ matrices
and $G\succeq0$ means that $G$ is positive semidefinite (has
non-negative eigenvalues). So $\Gram(f)$ is an affine-linear section
of the cone $\bbS^N_\plus$ of psd symmetric matrices, and one easily
checks that $\Gram(f)$ is compact.

If $f=p_1^2+\cdots+p_r^2$ with $p_i\in A_d$, and if $u_i\in\R^N$ is
the coefficients (column) vector of $p_i$, then the matrix
$G=\sum_{i=1}^ru_iu_i^t$ lies in $\Gram(f)$. Conversely, every point
of $\Gram(f)$ arises in this way. More precisely, two sum of squares
representations $f=\sum_{i=1}^rp_i^2=\sum_{i=1}^rq_i^2$ (which we can
assume to have the same length by possibly adding zero summands to
one of them) give the same element of $\Gram(f)$ if and only if they
are \emph{orthogonally equivalent}, which means that there is an
orthogonal $r\times r$ matrix $(a_{ij})$ such that $q_i=\sum_{j=1}^r
a_{ij}p_j$ ($i=1,\dots,r$). See \cite{clr} for this fact, where Gram
spectrahedra were first introduced. In other words, the points of
$\Gram(f)$ are in natural bijection with the orthogonal equivalence
classes of sum of squares representations of $f$.

For our purposes it is more convenient to represent elements of
$\Gram(f)$ as symmetric tensors $\sum_{i=1}^rp_i\otimes q_i=
\sum_{i=1}^rq_i\otimes p_i$ with $p_i,\,q_i\in A_d$, i.e.\ as
elements of $\sfS_2A_d$, the subspace of $A_d\otimes A_d$ of
symmetric tensors. Using the multiplication (linear) map
$\mu\colon\sfS_2A_d\to A_{2d}$, $\Gram(f)$ consists of all
$\theta\in\sfS_2A_d$ with $\mu(\theta)=f$ and $\theta\succeq0$. Here
$\theta\succeq0$ means that $\theta$ can be written
$\theta=\sum_{i=1}^rp_i\otimes p_i$ with $p_i\in A_d$. See
\cite{sch:gram} for this point of view.
\end{lab}

\begin{lab}\label{dimfaces}
Let $\theta\in\Gram(f)$, say $\theta=\sum_{i=1}^rp_i\otimes p_i$.
With $\theta$ we associate the linear subspace $U_\theta:=
\spn(p_1,\dots,p_r)$ of $A_d$. The supporting face $F$ of $f$
in $\Gram(f)$ (i.e.\ the unique face that contains $f$ in its
relative interior) consists of all $\eta\in\Gram(f)$ with
$U_\eta\subset U_\theta$. It therefore corresponds to the sos
representations of
$f$ that use only polynomials from $U_\theta$. For describing the
dimension of $F$ we can assume that $p_1,\dots,p_r$ are linearly
independent. Then $\dim(F)$ is the number of linear relations between
the forms $p_ip_j$ ($1\le i\le j\le r$), i.e.
$$\dim(F)+\dim(U_\theta U_\theta)\>=\>\choose{r+1}2$$
where $U_\theta U_\theta:=\spn(p_ip_j\colon1\le i\le j\le r)$
(\cite{sch:gram} Proposition 3.6). In particular, $\theta$ is an
extreme point of $\Gram(f)$ if and only if the $\choose{r+1}2$ forms
$p_ip_j$ are linearly independent. In this case we say that
$p_1,\dots,p_r$ are \emph{quadratically independent.}
\end{lab}

\begin{lem}\label{2repsameu}
Assume that $f\in A_{2d}$ has two non-equivalent sos representations
$f=\sum_{i=1}^rp_i^2=\sum_{i=1}^rq_i^2$ with $\spn(p_1,\dots,p_r)=
\spn(q_1,\dots,q_r)=:U$. Then $f$ has another sos representation
$f=\sum_{j=1}^su_j^2$ for which $\spn(u_1,\dots,u_s)$ is a proper
subspace of~$U$.
\end{lem}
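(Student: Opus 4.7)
The plan is to translate the hypothesis into the geometry of the Gram spectrahedron and then to move along the line joining the two given sos representations until the rank of the underlying symmetric tensor strictly drops.

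Let $\theta_1,\theta_2\in\Gram(f)\subset\sfS_2A_d$ be the symmetric tensors corresponding to $\sum_{i=1}^r p_i^2$ and $\sum_{i=1}^r q_i^2$ respectively. Since the two representations are not orthogonally equivalent we have $\theta_1\ne\theta_2$; after dropping zero summands we may assume that each of $(p_i)$ and $(q_i)$ is a basis of $U$, so that $U_{\theta_1}=U_{\theta_2}=U$ and both $\theta_i$, regarded as psd self-adjoint operators (with respect to any chosen inner product on $A_d$), have image $U$ and rank $r=\dim U$.

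Now I would consider the affine line $\theta(t):=(1-t)\theta_1+t\theta_2$ in $\sfS_2 A_d$. Linearity of the multiplication map $\mu$ gives $\mu(\theta(t))=f$ for every $t\in\R$, and the image of $\theta(t)$ is contained in $\mathrm{im}(\theta_1)+\mathrm{im}(\theta_2)=U$ for all~$t$. The set $I:=\{t\in\R:\theta(t)\succeq 0\}$ is a closed interval containing $[0,1]$. The operator $h:=\theta_2-\theta_1\ne 0$ has image in~$U$, so its restriction to $U$ is a nonzero self-adjoint operator and admits an eigenvector $v\in U$ for some nonzero eigenvalue $\lambda$; the map $t\mapsto v^t\theta(t)|_U\,v$ is then affine in $t$ with slope $\lambda\|v\|^2\ne 0$, hence eventually negative. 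Therefore $I$ has a finite endpoint $t_*$, at which $\theta(t_*)|_U$ is psd but no longer positive definite, so $\mathrm{rank}(\theta(t_*))<r$ and $U_{\theta(t_*)}\subsetneq U$.

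Writing $\theta(t_*)=\sum_{j=1}^s u_j\otimes u_j$ with $s=\mathrm{rank}(\theta(t_*))$ and the $u_j$ a basis of $U_{\theta(t_*)}$ then yields the desired representation $f=\sum_{j=1}^s u_j^2$ with $\spn(u_1,\dots,u_s)$ a proper subspace of~$U$. I expect the only mildly delicate step to be verifying the existence of a finite endpoint of $I$, which is the elementary linear-algebra observation made above that an affine function with nonzero slope eventually becomes negative.
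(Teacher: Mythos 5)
Your proof is correct, and it takes a genuinely different (and more self-contained) route than the paper's. The paper argues abstractly via the face structure of $\Gram(f)$: using the description of supporting faces from the background section, both representations lie in the relative interior of the same face $F$, so $F$ has positive dimension; compactness then supplies an extreme point $\theta$ of $F$, which necessarily has $U_\theta\subsetneq U$. You instead work directly on the line $\theta(t)=(1-t)\theta_1+t\theta_2$: since $\theta_1$ and $\theta_2$ are positive definite when restricted to $U$ and $h=\theta_2-\theta_1$ is a nonzero self-adjoint operator on $U$, the interval $I=\{t:\theta(t)\succeq0\}$ is a proper closed interval, and at a finite endpoint $t_*$ the rank of $\theta(t_*)$ drops below $\dim U$, giving the desired representation. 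The underlying convex-geometric content is the same (a bounded face with two interior points must have lower-dimensional boundary points), but your version replaces the appeal to the supporting-face characterization and the Krein--Milman-type existence of extreme points by an explicit eigenvector/slope computation, which makes the proof independent of the facial description in the background section. One small presentational point: it is worth noting explicitly that $0$ and $1$ lie in the interior of $I$ (because $\theta_1|_U$ and $\theta_2|_U$ are positive definite), so that $t_*$ is genuinely different from $0$ and $1$ and $\theta(t_*)$ is a new Gram point; this is implicit in your rank argument but deserves a sentence.
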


\begin{proof}
The two given representations represent two different points in the
Gram spectrahedron $\Gram(f)$, that both lie in the relative interior
of the same face $F$ of $\Gram(f)$.
Since $\Gram(f)$ is compact, $F$ has some extreme point $\theta$. If
$f=\sum_{j=1}^su_j^2$ is an sos representation that corresponds to
$\theta$, then $\spn(u_1,\dots,u_s)$ is a proper subspace of~$U$.
\end{proof}

The following lemma is used in the proof of our main result in
Section~3:

\begin{lem}\label{basic}
Let $f\in\Sigma_{2d}$ be a form with $\Q$-coefficients, and let
$\theta$ be an extreme point of $\Gram(f)$. If the space $U_\theta$
is defined over $\Q$, then the sos representation of $f$
corresponding to $\theta$ is (can be) defined over~$\Q$.
\end{lem}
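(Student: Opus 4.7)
The plan is to choose a $\Q$-basis of the rational subspace $U_\theta$, transport $\theta$ to a real symmetric matrix in those coordinates, deduce from the extremality of $\theta$ that this matrix is rational, and then produce a rational Gram factorization.

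First, I would pick a basis $p_1,\dots,p_r$ of $U_\theta$ consisting of forms in $\Q[x]_d$; this is possible since $U_\theta$ is defined over~$\Q$. Writing $\theta=\sum_{j,k=1}^rB_{jk}\,p_j\otimes p_k$ then determines a unique symmetric real matrix $B=(B_{jk})$, and the condition $\theta\succeq0$ is equivalent to $B\succeq0$ (expand any factorization $\theta=\sum_iq_i\otimes q_i$ with $q_i\in U_\theta$ in the basis $p_1,\dots,p_r$). Applying the multiplication map $\mu$ gives $f=\sum_{j,k}B_{jk}\,p_jp_k$. By~\ref{dimfaces}, the extremality of $\theta$ is equivalent to the $\choose{r+1}2$ products $p_jp_k$ ($j\le k$) being $\R$-linearly independent in $A_{2d}$. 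Since $f$ and all $p_jp_k$ have $\Q$-coefficients, uniqueness of the expansion forces every entry $B_{jk}$ to be rational.

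It then remains to factor the positive semidefinite rational matrix $B$ as $V^tV$ with $V$ a matrix over $\Q$: once this is achieved, the forms $q_i:=\sum_jV_{ij}p_j\in\Q[x]_d$ satisfy $\sum_iq_i\otimes q_i=\sum_{j,k}(V^tV)_{jk}\,p_j\otimes p_k=\theta$, and so $f=\sum_iq_i^2$ is a rational sos representation of $f$ corresponding to $\theta$. I expect this factorization step to be the only non-formal part of the argument, though it is classical: Lagrange's diagonalization method works over $\Q$ to write the psd quadratic form $y^tBy$ as $\sum_i\lambda_im_i(y)^2$ with $\lambda_i\in\Q_{\ge0}$ and $m_i\in\Q[y]_1$, and Lagrange's four-square theorem then expresses each $\lambda_i$ as a rational sum of four squares, yielding the required factorization $B=V^tV$ over~$\Q$.
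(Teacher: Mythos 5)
Your proof is correct and follows essentially the same route as the paper: pick a $\Q$-basis $p_1,\dots,p_r$ of $U_\theta$, use quadratic independence of the $p_i$ (equivalent to extremality of $\theta$, via \ref{dimfaces}) to force the Gram matrix of $\theta$ in this basis to be rational. You additionally spell out the final factorization of the rational psd matrix over $\Q$ (via Lagrange diagonalization plus the four-square theorem), a step the paper leaves implicit after merely noting that this matrix is in fact positive definite.
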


That $U_\theta$ is defined over $\Q$ means that $U_\theta$ has a
linear $\R$-basis consisting of polynomials with $\Q$-coefficients.

\begin{proof}
Let $p_1,\dots,p_r$ be a basis of $U_\theta$ consisting of
$\Q$-polynomials. Since $\theta$ is an extreme point of $\Gram(f)$,
the forms $p_i$ are quadratically independent, see \ref{dimfaces}.
Hence there is a unique $\R$-linear combination $f=\sum_{i,j=1}^r
a_{ij}p_ip_j$ with $a_{ij}=a_{ji}\in\R$. The matrix $(a_{ij})$ is
positive definite, and $a_{ij}\in\Q$ by uniqueness of the linear
combination. So $f$ is $\Q$-sos.
\end{proof}


\end{document}